\setlist{itemsep=1pt,parsep=0pt,topsep=2pt,partopsep=0pt}  
\def\endofFact{\hfill\scalebox{.6}{$\Box$}}
\let\subset\subseteq  
\let\eps\varepsilon 
\let\phi\varphi
\let\rho\varrho 
\newtheorem{theorem}{Theorem}
\newtheorem{lemma}[theorem] {Lemma}
\newtheorem{definition}[theorem] {Definition}  
\theoremstyle{remark}
\newcommand{\NATS}{\mathbb{N}}
\DeclareMathOperator{\pl}{pl} 
\DeclareMathOperator{\dcup}{\ensuremath{\mathaccent\cdot \cup}} 
\newcommand{\oldqed}{}
\title[Maximum planar subgraphs in dense graphs]{Maximum planar subgraphs in
dense graphs}
  \author[Peter Allen]{Peter Allen}
  \address{Department of Mathematics, London School of Economics, Houghton Street, London WC2A 2AE, United Kingdom.}
  \email{\{p.d.allen|j.skokan\}@lse.ac.uk}
  \author[Jozef Skokan]{Jozef Skokan}
  \author[Andreas W\"urfl]{Andreas W\"urfl}
  \address{Zentrum Mathematik, Technische Universität M\"unchen, Boltzmann\-strasse~3 D-85748, Garching bei M\"unchen, Germany.}   
  \email{wuerfl@ma.tum.de}
\date{\today} 
\begin{document} 
 
\begin{abstract} 
 K\"uhn, Osthus and Taraz showed that for each $\gamma>0$ there exists 
 $C$ such that any $n$-vertex graph with minimum degree $\gamma n$ 
 contains a planar subgraph with at least $2n-C$ edges. We find the 
 optimum value of $C$ for all $\gamma< 1/2$ and sufficiently large $n$.
\end{abstract}  

\maketitle

\section{Introduction} 

\thispagestyle{empty}

A way to reformulate typical questions in extremal graph theory is the 
following. Given a property $\mathcal{P}$ and an edge density (or 
minimum vertex degree, etc.), what is the `largest' member of 
$\mathcal{P}$ which must be contained in an $n$-vertex graph $G$ with 
the given density? For many problems in extremal graph theory, the
property $\mathcal{P}$ is somewhat trivial (for example, in Tur\'an's 
theorem, $\mathcal{P}$ is the set of cliques). However this is not always 
the case: for example, in the Erd\H{o}s-Stone~\cite{ErdSto46} theorem, 
$\mathcal{P}$ is the set of complete $r$-partite graphs, and the problem
of determining the `largest' complete $r$-partite subgraph remains 
active, with most recently results and generalisations due to Nikiforov 
\cite{Niki11}. In 2005, K\"uhn, Osthus and Taraz~\cite{KuOsTa05} 
suggested the study of the property $\mathcal{P}$ consisting of all 
planar graphs, which, while well-studied in other parts of graph theory,
have received relatively little attention from extremal graph theorists.
\smallskip

A plane graph is a drawing of a graph in the plane with no crossing edges. A
graph is called planar if it has a plane graph drawing. The planarity of a graph
$G$ is defined as the maximum number of edges in a planar subgraph of $G$.
We denote the planarity of $G$ by $\pl(G)$. K\"uhn, Osthus and 
Taraz~\cite{KuOsTa05} investigated the connection between the minimum 
degree $\delta(G)$ and planarity $\pl(G)$ of a graph $G$ by studying the
parameter
$$
 \pl(n,d):=\min\{\pl(G): |G|=n, \delta(G)\ge d\}.
$$
Among other results they proved the following theorem.

\begin{theorem} \label{thm:KOTtriang}
For each $\gamma>0$ there exists a constant $n_\gamma$ such that $\pl\big(n, (2/3+\gamma) n\big) = 3n-6$ for every integer $n\ge n_\gamma$.
\end{theorem}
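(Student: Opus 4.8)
The upper bound is immediate from Euler's formula: any planar graph on $n\ge 3$ vertices has at most $3n-6$ edges, so $\pl(G)\le 3n-6$ for every $n$-vertex $G$ and hence $\pl\big(n,(2/3+\gamma)n\big)\le 3n-6$. The whole content lies in the matching lower bound, so I would aim to show that every $n$-vertex graph $G$ with $\delta(G)\ge (2/3+\gamma)n$ contains a spanning planar \emph{triangulation} (a maximal planar subgraph on all of $V(G)$), since such a subgraph has exactly $3n-6$ edges, which forces the minimum in the definition of $\pl(n,d)$ up to $3n-6$.

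The plan is to produce one convenient spanning triangulation and embed it, rather than to analyse arbitrary planar subgraphs. So first I would construct, for every sufficiently large $n$, a triangulation $T_n$ on $n$ vertices that is simultaneously \emph{$3$-chromatic} (equivalently Eulerian, all degrees even), of \emph{bounded maximum degree}, and of \emph{bounded bandwidth}. A natural template is a strip or cylinder of the triangular lattice of bounded height $h$, which is $3$-colourable with maximum degree $6$: wrap the height direction into a cycle of length $h$ with $3\mid h$, and cap the two bounded-length end-cycles by constant-size triangulated disks chosen so that every face stays triangular, every degree stays even, and the $3$-colouring remains consistent. Adjusting the length and the caps by a bounded amount should realise every large $n$ while keeping the bandwidth $O(h)=O(1)$. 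The reason for insisting on $\chi(T_n)=3$ is that this is exactly what the degree threshold $2/3$ can support.

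Then I would embed $T_n$ as a spanning subgraph of $G$ by the regularity--blow-up method. Applying the Regularity Lemma to $G$, the reduced graph $R$ inherits minimum degree at least $(2/3+\gamma/2)|R|$, so by the Hajnal--Szemer\'edi theorem $R$ has a perfect triangle packing. Each triangle of $R$ corresponds to three pairwise $\eps$-regular clusters of positive density, into which one embeds the three colour blocks of a segment of $T_n$; because $T_n$ has bounded degree and bounded bandwidth, the Blow-up Lemma lets these segments be placed and the bounded overlaps between consecutive segments be stitched together across shared clusters. This is precisely the configuration packaged by the bandwidth theorem for host graphs forced to contain spanning subgraphs of chromatic number $3$.

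The main obstacle is the \emph{spanning} requirement: the embedding must cover every vertex of $G$ with no leftover, which forces the cluster sizes and the colour-class sizes of $T_n$ to match exactly and the exceptional vertices of the regular partition to be absorbed. I expect the bulk of the work to sit here, in balancing the partition, aligning the divisibility of $n$ with the structure of $T_n$, and disposing of the exceptional and leftover vertices (for instance by an absorbing device or by pre-embedding them into suitable regular triples), together with the combinatorial verification that the capped-lattice template really is a $3$-chromatic triangulation for all residues of $n$.
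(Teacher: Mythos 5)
This statement is quoted from K\"uhn, Osthus and Taraz~\cite{KuOsTa05}; the present paper cites it without proof, so there is no internal argument to compare against. That said, your plan is essentially the argument of~\cite{KuOsTa05} (and the special case $r=3$ of what was later packaged as the bandwidth theorem of B\"ottcher, Schacht and Taraz): the upper bound is Euler's formula; the lower bound forces a \emph{spanning} triangulation, since a planar subgraph omitting even one vertex has at most $3(n-1)-6<3n-6$ edges; and the spanning triangulation is obtained by embedding a single explicit template --- a $3$-chromatic (equivalently Eulerian), bounded-degree, small-bandwidth triangulation --- via the Regularity Lemma, a Hajnal--Szemer\'edi triangle factor in the reduced graph, and the Blow-up Lemma. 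Two caveats. First, what you have written is a plan rather than a proof: the two components you defer are exactly where all the work is, namely (i) exhibiting, for every large $n$ and every residue, an $n$-vertex Eulerian sphere triangulation of bounded degree and bandwidth whose end caps really close up into triangular faces with even degrees and a consistent $3$-colouring, and (ii) the exact-cover step (balancing cluster sizes, absorbing the exceptional set), which in the hands-on version occupies most of the original paper. Second, if you instead invoke the bandwidth theorem as a black box, item (ii) disappears and only the template construction remains --- a legitimate modern shortcut, though anachronistic relative to the 2005 original, which predates that theorem and was in fact one of its motivations. Neither caveat is a flaw in the approach; the strategy is the right one.
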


This was later improved by K\"uhn and Osthus~\cite{KuhOst05} to the following
result with the optimal bound on the minimum degree.

\begin{theorem} \label{thm:KOtriang}
There exists $n_2$ such that $\pl(n, 2n/3) = 3n-6$ for every integer $n\ge n_2$.
\end{theorem}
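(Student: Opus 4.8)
The upper bound $\pl(n,2n/3)\le 3n-6$ is Euler's formula, so the content of the statement is that \emph{every} $n$-vertex graph $G$ with $\delta(G)\ge 2n/3$ contains a spanning triangulation, i.e. a maximal planar subgraph (necessarily with exactly $3n-6$ edges). My plan is to fix one highly structured family of spanning triangulations and embed a member of it into $G$. Concretely, I would use \emph{stacked antiprisms}: partition $V(G)$ into triangles $T_1,\dots,T_m$ (so $n=3m$), arrange them in a line, and between each consecutive pair $T_i,T_{i+1}$ insert the ``side'' of a triangular antiprism, i.e. the octahedron $K_{2,2,2}$ with $T_i$ and $T_{i+1}$ as its two opposite faces. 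Drawing $T_1$ as the outer triangle, nesting $T_2,\dots,T_m$ inside it, and leaving $T_m$ as the innermost triangular face yields a plane triangulation; the edge count is $3m$ edges inside the triangles plus $6(m-1)$ band edges, giving exactly $9m-6=3n-6$, as required.

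The threshold $2n/3$ is chosen because it is precisely the Corr\'adi--Hajnal threshold, which hands us the triangle factor $T_1,\dots,T_m$ for free when $3\mid n$ (the cases $n\equiv 1,2\pmod 3$ I would patch by absorbing the one or two surplus vertices into the two end caps). It then remains to \emph{order} the triangles so that each consecutive pair admits an antiprism band. A band between $T_i=\{a,b,c\}$ and $T_{i+1}=\{d,e,f\}$ requires six of the nine cross pairs to be edges, namely all pairs lying outside one perfect matching $\{ad,be,cf\}$ of ``opposite'' vertices; such a matching can be chosen exactly when the cross non-edges between $T_i$ and $T_{i+1}$ themselves form a matching (no vertex of one triangle misses two vertices of the other). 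Call such a pair \emph{compatible}. For the extremal graph $K_{n/3,n/3,n/3}$ this works perfectly: taking every $T_i$ rainbow, any two rainbow triangles have exactly three cross non-edges forming a matching, so all pairs are compatible and any ordering succeeds, recovering the fact that the octahedron is $K_{2,2,2}$.

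The crux is therefore to find a Hamilton path in the compatibility graph $H$ on $\{T_1,\dots,T_m\}$, and I expect this to be the main obstacle, because here the bound is genuinely tight and naive arguments fail. Since each vertex may have up to $n/3$ non-neighbours, a single triangle can in principle be incompatible with a constant fraction of the others, so $H$ need not satisfy a Dirac-type minimum-degree condition; indeed a crude bound on the number of incompatible pairs already exceeds the total number $\binom{m}{2}$ of pairs, so no counting shortcut survives at $\delta=2n/3$. I would attack this by combining two ingredients: (i) exploiting the freedom to re-choose the triangle factor so as to spread the non-edges out and suppress local concentrations, and (ii) a stability dichotomy—if $G$ is far from $K_{n/3,n/3,n/3}$ there is enough slack to build and link the chain greedily (the comfortable regime, in the spirit of Theorem~\ref{thm:KOTtriang}), whereas if $G$ is close to the extremal tripartite graph one imports the explicit rainbow-triangle chain above and corrects the few deviating vertices by hand. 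Once the ordering is secured, verifying planarity, the exact count $3n-6$, and the divisibility corrections is routine; the entire difficulty is concentrated in the tight connection of the triangles at the optimal degree $2n/3$.
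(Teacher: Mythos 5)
This statement (Theorem~\ref{thm:KOtriang}) is not proved in the paper at all: it is quoted from K\"uhn and Osthus~\cite{KuhOst05} as background, so there is no in-paper proof to compare against. Judged on its own terms, your proposal is a reasonable and correctly set-up \emph{plan} --- the stacked-antiprism target is a genuine spanning triangulation with exactly $3n-6$ edges, Corr\'adi--Hajnal does supply the triangle factor at $\delta(G)\ge 2n/3$, and your characterisation of when two triangles admit an antiprism band (cross non-edges forming a matching) is right --- but it is not a proof, because the one step that carries all the difficulty is left as a declared intention rather than an argument.

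Concretely, the gap is the existence of a Hamilton path in the compatibility graph $H$ (indeed, even the existence of a single compatible pair is not argued). Your own counting shows why nothing cheap works: a vertex has up to $n/3$ non-neighbours, so a fixed triangle can a priori be incompatible with essentially all others, and $H$ satisfies no Dirac-type condition. The two repairs you name do not close this. For (ii), the ``comfortable regime'' cannot be handled ``in the spirit of Theorem~\ref{thm:KOTtriang}'', since that theorem requires minimum degree $(2/3+\gamma)n$; being structurally far from $K_{n/3,n/3,n/3}$ does not supply that degree surplus, so you would need a genuine stability version of the chain-building argument, which is exactly the hard content of~\cite{KuhOst05}. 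For (i), ``re-choosing the triangle factor to spread out non-edges'' is not a procedure: you would have to prove that \emph{some} factor admits a compatible ordering, and nothing rules out that every factor contains a triangle incompatible with almost everything. There is also a structural risk in committing to one rigid family of triangulations: the theorem only promises \emph{some} spanning triangulation, and it is not established that every graph with $\delta(G)\ge 2n/3$ contains a spanning stacked antiprism; if some such graph does not, the approach fails irreparably rather than just incompletely. The divisibility patching and the planarity/edge-count verifications are indeed routine, but the theorem's entire content sits in the unproven linking step.
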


More recently, Cooley, {\L}uczak, Taraz and W\"urfl~\cite{CLTW12} showed 
the following threshold behaviour of $\pl(n,d)$ at minimum degree $d=n/2$.

\begin{theorem} \label{thm:CLTWjump}
For every $\mu>0$ there exists $n_\mu$ such that, for every $n\ge n_\mu$, we have that
\begin{align*}
 \pl\big(n, \lceil n/2\rceil\big) &\ge (2.25-\mu)n & &\text{ for $n$ odd,}\\
\intertext{and} 
 \pl(n,n/2+1) &\ge (2.5-\mu)n & &\text{ for $n$ even.}
\end{align*}
\end{theorem}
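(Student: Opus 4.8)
The plan is to prove both lower bounds by the regularity method, reducing to a structural analysis of the reduced graph and then a dedicated treatment of the extremal, near-bipartite configuration where the constants $2.25$ and $2.5$ are pinned down. Since $\pl(n,d)$ is a minimum over graphs, it suffices to fix an arbitrary $n$-vertex $G$ with $\delta(G)\ge\lceil n/2\rceil$ (for $n$ odd), respectively $\delta(G)\ge n/2+1$ (for $n$ even), and to exhibit a planar subgraph with at least $(2.25-\mu)n$, respectively $(2.5-\mu)n$, edges. First I would apply Szemer\'edi's regularity lemma to obtain clusters $V_1,\dots,V_k$ of common size $m\approx n/k$ and a reduced graph $R$ on $[k]$ with $\delta(R)\ge(1/2-o(1))k$. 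The two elementary building blocks are embedding lemmas: a single regular pair of positive density on clusters of size $m$ hosts a bounded-degree planar bipartite graph (a grid, or a quadrangulation) with about $2\cdot 2m$ edges, i.e.\ $2$ edges per vertex, while a regular triple hosts a $3$-colourable planar triangulation with about $3\cdot 3m$ edges, i.e.\ $3$ edges per vertex. Stitching such blocks over a cover of the clusters turns a combinatorial structure in $R$ into a planar subgraph of $G$ whose size is the sum of the per-vertex yields.

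The decisive point is that $\delta(R)\gtrsim k/2$ is on its own too weak to beat the bipartite yield $\approx 2n$, since $K_{k/2,k/2}$ meets it yet is triangle-free; the extra $+1$ in the degree (and the ceiling in the odd case) is a lower-order term that the regularity reduction erases, so it is invisible in $R$ and can only be exploited below the regularity scale. What it buys is forced internal edges. In the even case every vertex of a balanced bipartition $A\cup B$ has at least one neighbour on its own side (its cross-degree is at most $|A|=n/2<\delta$), so each part carries at least $n/4$ internal edges and hence $G$ has at least $n/2$ internal edges; in the odd case the same reasoning applied to the larger part yields at least $\approx n/4$ internal edges. The main method is then: embed a bounded-degree spanning quadrangulation (a grid) across the cut using the cross-density, giving $\approx 2n$ edges, and insert the internal edges as diagonals of distinct quadrilateral faces, each adding one edge while preserving planarity. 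Counting the insertable diagonals supplies the extra $\approx n/2$ (even) or $\approx n/4$ (odd) edges, reaching $(2.5-\mu)n$ respectively $(2.25-\mu)n$.

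This method needs a balanced bipartition whose cut is dense enough to host the grid. If no balanced bipartition is crossed densely, then $G$ is concentrated inside its parts, and here I would invoke the Andr\'asfai--Erd\H{o}s--S\'os phenomenon: at $\delta(G)\ge n/2>2n/5$ a triangle-free graph would already be bipartite, so sparse balanced cuts cannot coexist with triangle-freeness, and $G$ must contain dense, genuinely non-bipartite pieces. Triangulating these via regular triples gives a $3$-per-vertex yield, and I would verify that this surplus comfortably clears the target, so that the binding case is the dense-cut (near-bipartite) one treated above.

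The main obstacle is the simultaneous diagonal insertion: I must realise the required number of internal edges \emph{at once} as diagonals of distinct faces of a \emph{single} plane quadrangulation. In a bounded-degree grid each vertex lies on only $O(1)$ faces, so a vertex of high internal degree cannot have all of its internal edges realised as diagonals; the correct statement is that the internal edges must be spread out, i.e.\ the internal-degree graph must embed into the diagonal-adjacency pattern of the grid. This is controlled because in the dense-cut regime the internal edges total only $\approx n/2$, so their average degree is $\approx 1$ and a typical internal edge $a a'\subseteq A$ has $\approx n/2$ common neighbours in $B$, hence lies in many candidate faces, giving ample room (via a Hall-type or augmenting-path argument on the edge--face incidence structure) to route the grid and avoid conflicts; the few vertices of high internal degree carry dense local structure that is instead triangulated, and the $\mu n$ slack absorbs them together with the $O(\sqrt n)$ boundary loss of the grid. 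Making this packing rigorous, and quantifying the Andr\'asfai--Erd\H{o}s--S\'os dichotomy so that the complementary case provably exceeds the target, is where the bulk of the work lies; the two embedding lemmas and the internal-edge count are comparatively routine.
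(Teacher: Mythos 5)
First, a point of reference: Theorem~\ref{thm:CLTWjump} is quoted from Cooley, {\L}uczak, Taraz and W\"urfl~\cite{CLTW12} and is not proved in this paper, so there is no internal proof to compare against; I can only assess your argument on its own terms. You correctly locate the source of the constants: a quadrangulation of a balanced cut is worth $2n-O(1)$ edges, and the minimum degree forces internal edges because a vertex's cross-degree in a balanced bipartition is capped at roughly $n/2$, giving about $n/2$ forced internal edges in the even case and about $n/4$ in the odd case (only the larger part is forced). The extremal examples ($K_{n/2,n/2}$ plus a perfect matching in each part, and the odd analogue) confirm these targets. However, two steps of your plan have genuine gaps. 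The first is the dichotomy: Andr\'asfai--Erd\H{o}s--S\'os concerns triangle-free graphs and gives you nothing here. In an arbitrary balanced bipartition a vertex is only guaranteed cross-degree $2$, so the cut may be far too sparse to carry a spanning quadrangulation (two cliques of size $n/2+2$ sharing four vertices is the extreme case), and in the intermediate regime the triangle-rich pieces may cover only a small fraction of the vertices: if $0.1n$ vertices yield $3$ edges each and the remaining $0.9n$ yield $2$, the total is $2.1n<2.25n$. You need a quantitative interpolation showing that whatever is lost from the cut is recovered, vertex for vertex, by triangulated structure; this is the heart of the proof and is missing.

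The second gap is the simultaneous diagonal insertion. Realising internal edges as diagonals of distinct faces of a quadrangulation of maximum degree $D$ lets each vertex absorb at most $D$ of its internal edges, and the forced internal edges need not be spread out. Take $G=K_{n/2,n/2}$ plus a spanning star inside each part: then $\delta(G)=n/2+1$, each part has about $n/2$ internal edges, but they all share one endpoint, so your method recovers only $2n+O(D)$ edges. Your escape clause --- that vertices of high internal degree ``carry dense local structure that is instead triangulated'' --- is false as stated, since the star centre's internal neighbourhood is an independent set. (The theorem does hold for this $G$: a double wheel with hubs at the two star centres and an alternating rim is a spanning triangulation with $3n-6$ edges, but that configuration is not produced by a quadrangulation-plus-diagonals scheme, nor by triangulating a dense neighbourhood.) So the treatment of concentrated internal edges requires a different idea, not merely a Hall-type routing on the edge--face incidence structure; together with the uncalibrated dichotomy, this means the proposal is not yet a proof.
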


This indeed constitutes a threshold behaviour since $\pl(n,\lfloor n/2\rfloor)\le 2n-4$ 
for all integers $n$ as one can see from the class of complete bipartite graphs. For 
smaller values of $d$ one does not observe such rapid changes in the planarity. 
Indeed, K\"uhn, Osthus and Taraz~\cite{KuOsTa05} showed that $\pl(n,d)$ varies 
only by a constant term for the whole range of $d=\gamma n$ with $\gamma \in (0,1/2)$.


\begin{theorem}\label{thm:KOTquad}
 For each $\gamma>0$ there is $C=C(\gamma)$ such that 
 $\pl(n,\gamma n) \ge 2n - C$ for every integer $n$.
\end{theorem}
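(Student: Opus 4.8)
The plan is to build, inside $G$, a spanning plane \emph{quadrangulation}: a $2$-connected bipartite plane subgraph all of whose faces are bounded by $4$-cycles. By Euler's formula such a graph on $m$ vertices has exactly $2m-4$ edges, the $-4$ being the unavoidable planar deficiency, so a quadrangulation spanning a whole component yields $2m-4$ planar edges there. Since $\delta(G)\ge\gamma n$ forces every component to have more than $\gamma n$ vertices, $G$ has at most $1/\gamma$ components; as the target bound is additive over components, summing costs only an extra $O(1/\gamma)$ and it suffices to treat a single dense component, obtaining $2n-C$ with $C=C(\gamma)$. This also explains the shape of the bound: for $\gamma<1/2$ the complete bipartite graphs $K_{\gamma n,(1-\gamma)n}$ have no planar subgraph with more than $2n-4$ edges yet do contain a spanning quadrangulation, so $2n-O(1)$ is exactly the right target.

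The engine is incremental face-splitting. I would maintain a plane quadrangulation $H\subseteq G$ and insert the remaining vertices one at a time: to insert $v$, I locate two neighbours of $v$ sitting at opposite corners of a common $4$-face of $H$ and place $v$ inside that face, joined to both. This splits the face into two $4$-faces, adds exactly two edges, and keeps $H$ a quadrangulation, so every \emph{successfully} inserted vertex contributes precisely two edges. It is this exactness that turns ``almost all vertices'' into the constant-loss bound $2n-O(1)$ rather than a weaker $(2-o(1))n$. For the seed, a convexity estimate gives $\sum_v\binom{\deg v}{2}\ge n\binom{\gamma n}{2}$, and since the left side also counts, over all pairs, their common neighbours, some pair has at least $\gamma^2 n/2$ common neighbours for $n$ large; the book $K_{2,t}$ on such a pair together with $t\ge\gamma^2n/2$ of these common neighbours is a quadrangulation with $t$ quadrilateral faces, providing a large starting structure with many faces to absorb into.

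The main obstacle is to guarantee that \emph{every} remaining vertex can be inserted, i.e.\ that at each step the current vertex has two neighbours on a common face. This is precisely where the constant-loss statement becomes delicate: a regularity-type argument would at this point discard a sublinear exceptional set and deliver only $(2-o(1))n$. The danger is that a vertex's $\ge\gamma n$ neighbours are scattered at most one per face, so that no single $4$-face contains two of them. I would control this by choosing the backbone so that its face set \emph{dominates} the rest of $G$ (every not-yet-inserted vertex retains two neighbours on some available face) and by inserting vertices in a carefully chosen order, using the abundance of faces of the seed together with $\delta(G)\ge\gamma n$ to keep enough room throughout. A secondary point is that for small $\gamma$ one cannot rely on triangles, since two vertices across a sparse cut may have no common neighbour, so the construction must remain bipartite, which is consistent with aiming for a quadrangulation. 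Verifying that the domination and the ordering can be arranged for all but $O_\gamma(1)$ vertices is the technical heart of the argument, and the step I expect to be hardest.
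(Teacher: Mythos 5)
Theorem~\ref{thm:KOTquad} is quoted from K\"uhn, Osthus and Taraz~\cite{KuOsTa05}; the paper does not reprove it, so the natural benchmark is the proof of the stronger Theorem~\ref{thm:main} (Case~2), which builds spanning quadrangulations via the Regularity Lemma, a bounded-degree spanning tree of the reduced graph, the arrangeable Blow-up Lemma, and the ``bag'' reordering trick. Your global plan --- one spanning quadrangulation per component, each worth exactly $2m-4$ edges, with at most $1/\gamma$ components --- matches that architecture, and your $K_{2,t}$ seed with $t\ge\gamma^2n/2$ common neighbours of some pair is a sound starting point.

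The gap is the insertion step, and it is not a technicality: it is essentially the entire content of the theorem. You require each remaining vertex $v$ to have two neighbours at \emph{opposite corners of a common $4$-face} of the current plane graph $H$. Nothing in $\delta(G)\ge\gamma n$ prevents the $\ge\gamma n$ neighbours of $v$ from being spread one per face (you note this yourself), and your proposed remedies --- ``choose the backbone so its face set dominates'' and ``insert in a carefully chosen order'' --- restate the goal rather than construct anything. Two concrete ideas are missing. First, the reordering observation that the paper isolates as a \emph{bag}: the interior vertices of a $K_{2,k}$ may be permuted in the plane drawing at no cost, so it suffices for $v$ to have two neighbours \emph{anywhere} among the $k$ interior vertices of a single bag; that weaker condition really does follow from $\deg(v)\ge\gamma n$ once the bags are large (order about $n^{1/3}$) and one runs a counting/assignment argument as in Section~3 of the paper. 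Without this relaxation the greedy face-splitting can genuinely stall. Second, the partial quadrangulation must already contain the neighbours of the vertices still to be inserted: in the paper a spanning backbone covering all but $O(\eps n)$ vertices is built \emph{first} (via regularity and the Blow-up Lemma along a bounded-degree spanning tree of a component of the reduced graph), and only a small exceptional set is absorbed into bags afterwards. A purely incremental one-vertex-at-a-time scheme growing from a single $K_{2,t}$ has no mechanism to reach vertices whose neighbourhoods avoid the current structure, and no insertion order is specified that would guarantee progress. As written, the proposal correctly locates the difficulty but does not resolve it.
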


For $\gamma<1/2$ this is optimal up to the value of the constant $C$.
For $\gamma\ge 1/2$ the above statement trivially holds: a Hamilton cycle with chords 
from one vertex on the inner face and from another vertex on the outer face proves 
that every $n$-vertex graph with minimum degree at least $n/2$ has a planar subgraph 
with $2n-4$ edges. So it is natural to ask whether there are values $\gamma<1/2$ such 
that $C(\gamma)=4$. We answer this in the affirmative as we determine the optimal 
value of $C(\gamma)$ for all $0<\gamma<1/2$. 

\begin{theorem}\label{thm:main} 
 For every $\gamma\in(0,1/2)$ there exists $n_\gamma$ such that 
 $\pl(n,\gamma n) = 2n-4k$ for every $n\ge n_\gamma$, where 
 $k\in\NATS$ is the unique integer such that $k\le 1/(2\gamma)< k+1$. 
 Hence, $C(\gamma)=4\lfloor1/(2\gamma)\rfloor$ for $n\ge n_\gamma$.
\end{theorem}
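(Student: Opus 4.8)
The plan is to prove matching upper and lower bounds. For the \emph{upper bound}, set $k=\lfloor 1/(2\gamma)\rfloor$, so that $2\gamma k\le 1$, and construct an extremal graph $G^*$ as a vertex-disjoint union $K_{a_1,b_1}\cup\cdots\cup K_{a_k,b_k}$ of $k$ complete bipartite graphs with $\sum_i(a_i+b_i)=n$ and every $a_i,b_i\ge\gamma n$; this is possible precisely because $2\gamma k\le 1$ leaves enough vertices to spare. Then $\delta(G^*)=\min_i\min(a_i,b_i)\ge\gamma n$. Since each $K_{a_i,b_i}$ is bipartite, every planar subgraph of it has at most $2(a_i+b_i)-4$ edges, and a quadrangulation attains this bound; as a planar subgraph of a disjoint union is the disjoint union of planar subgraphs of the parts, $\pl(G^*)=\sum_i\big(2(a_i+b_i)-4\big)=2n-4k$, whence $\pl(n,\gamma n)\le 2n-4k$.

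For the \emph{lower bound} I would show every $G$ with $\delta(G)\ge\gamma n$ contains a planar subgraph with at least $2n-4k$ edges. The target is a planar subgraph $P$ that is a vertex-disjoint union of pieces $P_1,\dots,P_m$ covering $V(G)$, where each $P_j$ is either a \emph{quadrangulation}, contributing exactly $2|V(P_j)|-4$ edges, or a denser planar graph contributing at least $2|V(P_j)|$ edges; then $e(P)\ge 2n-4q$, where $q$ is the number of quadrangulation pieces, and it remains to force $q\le k$. To split $V(G)$ I would apply the regularity lemma, work in a reduced graph $R$ with $\delta(R)\ge(\gamma-\eps)|R|$, and take the pieces to correspond to the connected components of $R$; within each component the minimum degree in $G$ is essentially preserved, so each piece inherits relative minimum degree $\gtrsim\gamma$. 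A piece whose relative minimum degree exceeds $1/2$ carries, by Theorem~\ref{thm:CLTWjump} together with the monotonicity of $\pl(n,d)$ in $d$, at least $(2.25-\mu)|V(P_j)|>2|V(P_j)|$ edges, so it incurs no loss. Every remaining (quadrangulation) piece has relative minimum degree at most $1/2$, hence at least $(2-o(1))\gamma n$ vertices; as the pieces are vertex-disjoint and $1/(2\gamma)<k+1$ by the definition of $k$, there are at most $k$ of them for $n\ge n_\gamma$, giving $q\le k$ as required.

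The two hard steps are hidden in the word ``quadrangulation''. First, inside each sparse piece one must actually build a planar subgraph off by only the additive constant $4$, i.e. a genuine spanning quadrangulation with $2|V|-4$ edges; this exact constant is the whole improvement over the $2n-C$ of Theorem~\ref{thm:KOTquad}. I would produce it by finding a connected spanning cluster structure in the component of $R$, blowing it up to a cylindrical quadrangulation (all faces $4$-cycles) via the regular pairs, and then \emph{absorbing} the $O(\eps n)$ exceptional and leftover vertices one at a time: each is inserted as a degree-$2$ vertex joined to two opposite corners of some $4$-face, which preserves bipartiteness, planarity, the all-quadrilateral property, and the identity $e=2|V|-4$, and needs only two of the vertex's $\ge\gamma n$ neighbours. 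This absorption is what makes the bound an exact equality rather than $2n-O(1)$.

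The main obstacle is the interaction of these steps: one must choose the decomposition so that simultaneously the loss-incurring pieces number at most $k$ and each sparse piece is robustly enough connected in $R$ to support a spanning quadrangulation. Passing to the components of $R$ rather than of $G$ is essential here --- two copies of $K_{n/4,n/4}$ joined by a single edge form a \emph{connected} graph of planarity only $2n-7$, so no argument run on the components of $G$ can work, whereas such a bridge does not join components of $R$. Verifying that the regular structure of each component genuinely yields a quadrangulation, and that the threshold $\gamma>1/(2(k+1))$ forces at most $k$ sparse components, is where the real work of the lower bound lies.
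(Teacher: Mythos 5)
Your extremal construction and your top-level counting (decompose along components of the reduced graph; each ``sparse'' piece costs exactly $4$; at most $k$ pieces can be sparse because each has at least $(2-o(1))\gamma n$ vertices and $1/(2\gamma)<k+1$) match the paper's. Your treatment of a component with relative minimum degree above $1/2$ via Theorem~\ref{thm:CLTWjump} is a workable alternative to the paper's Case~1, which instead finds a triangle in a small component of $R$, embeds a triangulation on $s=C(\beta)+6$ vertices by the Counting Lemma, and applies Theorem~\ref{thm:KOTquad} to the rest, so that the $3s-6$ edges of the triangulation pay off the constant $C(\beta)$. The genuine gap is in the step you yourself flag as hard, and your proposed mechanism for it does not work as stated. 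If the spanning quadrangulation built from the regular pairs is a bounded-degree ``cylindrical'' one, then each vertex is an opposite corner (across a common $4$-face) of only $O(1)$ other vertices. The auxiliary graph of ``insertable pairs'' on a cluster $V_i$ therefore has bounded degree, and nothing prevents a leftover vertex's $\ge\gamma n$ neighbours from forming an independent set in that auxiliary graph --- in which case the vertex has no $4$-face whose two opposite corners are both its neighbours, and the absorption stalls. Having two neighbours somewhere is not the issue; you need two neighbours that can simultaneously serve as opposite corners of one face.

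The paper's fix is structural and is the real content of the lower bound. The quadrangulation is built (Lemma~\ref{lem:blow-up-of-tree}) so that almost every vertex lies in a \emph{bag}: a copy of $K_{2,m}$ with $m\approx n^{1/3}$ whose $m$ degree-$2$ interior vertices can be arbitrarily reordered without disturbing planarity, so that \emph{any} two interior vertices of one bag can be made opposite corners of a common face. A leftover vertex then only needs two neighbours inside a single bag, and a counting argument (the ``good bag'' assignment, which is where the explicit choice of $\eps$ enters) distributes the at most $(8(k+1)+1)\eps n$ leftover vertices so that no bag receives more than $n^{1/3}/(128k^2)$ of them. The price is that the quadrangulation now has maximum degree about $n^{1/3}$, so the classical Blow-up Lemma does not apply; the paper needs the arrangeable Blow-up Lemma (Theorem~\ref{thm:Blow-up:arr:plex}, via the fact that planar graphs are $10$-arrangeable) together with a bounded-degree spanning tree of each component of $R$ (Lemma~\ref{lem:spanbound}) to embed it. Neither the large reorderable bags nor the unbounded-degree embedding tool appears in your sketch, and without them the two ``hard steps'' you identify remain open.
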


Note that the constants are best possible for the given minimum degree 
condition: the graph consisting of $k$ disjoint copies of $K_{t,t}$ has 
$2kt$ vertices, is $t$-regular, and has no planar subgraph with more 
than $4kt-4k$ edges because $K_{t,t}$ has no planar subgraph with more 
than $4t-4$ edges.

\section{Tools and lemmas}

Our main tools in the proof are variants of the Regularity Lemma~\cite{Szem78} 
and the Blow-up Lemma~\cite{KSS97}. In order to formulate the versions 
that we will use, we first introduce some terminology.
 
Let $G=(V, E)$ be a graph and let $\eps,d\in(0,1]$. For disjoint nonempty sets 
$U,W\subset V$, we denote by $e(U, W)$ the number of edges between $U$
and $W$, and define the \emph{density} of the pair $(U,W)$ as 
$d(U,W):=e(U,W)/|U||W|$. A pair $(U,W)$ is \emph{$\eps$-regular} if
$$|d(U',W')-d(U,W)|\le\eps$$ for all $U'\subset U$ and $W'\subset W$ with
$|U'|\ge\eps|U|$ and $|W'|\ge\eps|W|$. If the pair $(U,W)$ is $\eps$-regular 
and has density at least $d$, then we say that  $(U,W)$ is \emph{$(\eps,d)$-regular}.

An \emph{$\eps$-regular partition} of~$G=(V, E)$
is a partition $V_0\dcup V_1\dcup\dots\dcup V_r$ of $V$ with $|V_0|\le\eps |V|$,
$|V_i|=|V_j|$ for all $i,j\in[r]:=\{1,\ldots,r\}$, and such that, for all but at most 
$\eps r^2$ pairs $(i,j)\in[r]^2$, the pair $(V_i,V_j)$ is $\eps$-regular.

We say that an $\eps$-regular partition $V_0\dcup V_1\dcup\dots\dcup V_r$ of a 
graph $G$ is an \emph{$(\eps,d)$-regular partition} if the following is true. For every
$i\in [r]$ and every vertex $v\in V_i$, there are at most $(\eps+d)n$ edges
incident to $v$ which are not contained in $(\eps,d)$-regular pairs of the
partition.

Given an $(\eps,d)$-regular partition $V_0\dcup V_1\dcup\dots\dcup V_r$ of a
graph $G$, we define a graph $R$, called the \emph{reduced graph} of the
partition of $G$, where $R=(V(R),E(R))$ has $V(R)=\{V_1,\ldots,V_r\}$ and
$V_iV_j\in E(R)$ whenever $(V_i,V_j)$ is an $(\eps,d)$-regular pair. We will
usually omit the partition, and simply say that $G$ has
\emph{$(\eps,d)$-reduced graph}~$R$. We call the partition classes $V_i$ with
$i\in[r]$ \emph{clusters} of $G$. Observe that our definition of the reduced
graph~$R$ implies that, for $T\subset V(R)$, we can, for example, refer to the set
$\bigcup T$, which is a subset of $V(G)$.

In our proof, we require the minimum degree form of the Regularity Lemma.
\begin{lemma}[Regularity Lemma, minimum degree form]\label{lem:plex:reg}
 For all positive $\eps$, $d$ and $\gamma$ with $0<\eps<d<\gamma< 1$ 
 there is $r_1$ such that every graph $G$ on $n>r_1$ vertices with minimum 
 degree $\delta(G)\ge\gamma n$ has an $(\eps,d)$-reduced graph~$R$ on $r$ 
 vertices such that $r\le r_1$ and $\delta(R)\ge(\gamma-d-\eps)r$.
\end{lemma}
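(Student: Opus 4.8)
The plan is to deduce this from the classical Szemer\'edi Regularity Lemma by a cleaning step that upgrades a generic regular partition to one satisfying the per-vertex condition in the definition of an $(\eps,d)$-regular partition. First I would apply the standard lemma with an auxiliary regularity parameter $\eps'$ chosen much smaller than $\eps$, and with a large lower bound $m$ on the number of parts (so that $1/m$ is negligible), obtaining an $\eps'$-regular equipartition $V_0\dcup V_1\dcup\dots\dcup V_r$ with $|V_0|\le\eps' n$ and $m\le r\le r_1:=M(\eps',m)$. I then form the candidate reduced graph $R$ by joining $V_i$ to $V_j$ exactly when $(V_i,V_j)$ is $(\eps,d)$-regular. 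Granting that this partition is an $(\eps,d)$-regular partition, the minimum degree bound is immediate: for any $v\in V_i$ at most $(\eps+d)n$ of its edges lie outside $(\eps,d)$-regular pairs, so at least $\delta(G)-(\eps+d)n\ge(\gamma-\eps-d)n$ of its edges run into clusters adjacent to $V_i$ in $R$; since each such cluster has size at most $n/r$, the vertex $V_i$ has at least $(\gamma-\eps-d)r$ neighbours in $R$.

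So the whole content is to arrange the per-vertex condition, enlarging $V_0$ if necessary. For a vertex $v\in V_i$ I would split the edges not lying in $(\eps,d)$-regular pairs into two kinds, designed to be absorbed by the two terms of the budget $(\eps+d)n$. The \emph{cheap} edges --- those into $V_0$, those inside $V_i$, and those into clusters $V_j$ for which $(V_i,V_j)$ is $\eps'$-irregular --- have small total count: edges meeting $V_0$ number at most $\eps' n^2$, edges inside clusters at most $n^2/r$, and edges across irregular pairs at most $\eps' n^2$, since there are at most $\eps' r^2$ irregular pairs each carrying at most $(n/r)^2$ edges. A routine averaging argument then shows that all but an $O(\sqrt{\eps'})$-fraction of vertices carry at most $\sqrt{\eps'}\,n$ cheap edges; in particular only $O(\sqrt{\eps'})\,r$ clusters lie in more than $\sqrt{\eps'}\,r$ irregular pairs.

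The remaining, and only substantial, difficulty is the low-density edges --- those into clusters $V_j$ with $(V_i,V_j)$ $\eps'$-regular but of density below $d$ --- because globally these may amount to about $dn^2$ edges, which is not negligible and is exactly the size of the $dn$ part of the budget, so a global average is useless. Here I would instead invoke regularity vertex-wise: in an $\eps'$-regular pair of density $\delta<d$, all but at most $\eps'|V_i|$ vertices of $V_i$ send at most $(\delta+\eps')|V_j|\le(d+\eps')|V_j|$ edges into $V_j$. Summing this typical bound over the low-density neighbours $V_j$ of $V_i$ gives at most $(d+\eps')n$ low-density edges, which together with the cheap edges stays below $(d+\eps)n$ once $\eps'$ and $1/m$ are small. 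The vertices exceptional for many low-density pairs are few: the total number of (vertex, low-density pair) exceptional incidences is at most $\eps' n r$, so only an $O(\sqrt{\eps'})$-fraction of vertices are exceptional for more than a $\sqrt{\eps'}$-fraction of clusters, and for the rest the extra low-density edges beyond the typical bound total at most $\sqrt{\eps'}\,n$.

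Finally I would move every offending vertex --- those carrying too many cheap edges, lying in a cluster with too many irregular pairs, or exceptional for too many low-density pairs --- into the exceptional set. As the number moved is $O(\sqrt{\eps'})\,n$, the enlarged exceptional set still satisfies $|V_0|\le\eps n$, and restoring equal cluster sizes costs only a handful of further vertices per cluster; after this surgery every remaining vertex meets the per-vertex bound $(\eps+d)n$, so the partition is genuinely an $(\eps,d)$-regular partition with reduced graph $R$ of the claimed minimum degree. I expect the bookkeeping around the low-density pairs, and keeping the several $O(\sqrt{\eps'})$ error terms simultaneously below the slack $\eps-\eps'$, to be the one genuinely delicate point; everything else is standard cleaning.
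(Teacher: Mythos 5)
The paper gives no proof of this lemma at all, simply citing \cite[Proposition~9]{KuOsTa05}, and your cleaning argument is exactly the standard derivation used there (and in the usual ``degree form'' of the Regularity Lemma): apply Szemer\'edi with a much finer parameter $\eps'$, split the bad edges at a vertex into the globally negligible ones and those into regular pairs of density below $d$, handle the latter vertex-wise via the regularity of each pair, and sweep the few exceptional vertices into $V_0$. The plan is correct; the only point you gloss over --- that rebalancing cluster sizes after the surgery is cheap --- is in fact covered by your own per-cluster bounds (at most $\eps'|V_i|$ exceptional vertices per low-density pair, and whole clusters lying in many irregular pairs can simply be discarded), so nothing essential is missing.
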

Lemma \ref{lem:plex:reg} is an easy consequence of the original Regularity Lemma of 
Szemer\'edi~\cite{Szem78}. Its proof can be found, for example, in~\cite[Proposition~9]{KuOsTa05}.

Now we outline our proof strategy for Theorem~\ref{thm:main}. First, we apply 
Lemma \ref{lem:plex:reg} to a~given $n$-vertex graph $G$ with minimum degree at 
least $\gamma n$ and obtain the reduced $r$-vertex graph $R$ whose minimum 
degree is almost as large as $\gamma r$. Then we need to distinguish two cases.

If any component of $R$ has less than $2\delta(R)$ vertices, then it contains a
triangle. Using this triangle we will find a small triangulation $T$ (that is, a plane 
graph whose every face is a triangle) in $G$, and Theorem~\ref{thm:KOTquad} will 
guarantee a subgraph $S$ of the rest of the graph $G-V(T)$ such that the disjoint 
union of $S$ and $T$ has at least $2n$ edges. 

It follows that each component of $R$ has at least $2\delta(R)>r/(k+1)$ vertices, 
and thus $R$ has at most $k$ components. These components correspond to $k$
well-connected subgraphs of $G$ and cover almost all vertices of $G$. In each 
subgraph we will find a quadrangulation (a plane graph whose every face has four 
edges) which has a certain `accepting' property that allows the few remaining vertices 
to be inserted. We conclude that there is a collection of at most $k$ vertex-disjoint 
quadrangulations covering all the vertices of~$G$. Since every quadrangulation on 
$m$ vertices has $2m-4$ edges, the theorem follows.

As one can see from the above outline, our argument divides into two cases, 
depending on whether the reduced graph $R$ has a small component or not. In each 
case we shall need some embedding results, which we now describe in detail.

When the reduced graph $R$ does have a small component, we will need the 
following embedding result, an easy case of the Counting Lemma (see, for example, 
Theorem 2.1 in~\cite{KS96}). 

\begin{lemma}\label{lem:count} For each $d>0$ and $s\in\NATS$ there exist
 $\eps>0$ and $m_0$ such that whenever $m\ge m_0$ the following holds. Let
 $U,V,W$ be three pairwise disjoint vertex sets each of size $m$. Suppose that
 each pair forms an $(\eps,d)$-regular pair in a graph $G$. Then $G$ contains
 every $3$-partite triangulation on $s$ vertices.
\end{lemma}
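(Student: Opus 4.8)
The plan is to prove this by a standard greedy embedding argument, realising the target $3$-partite triangulation one vertex at a time while maintaining large \emph{candidate sets} for the vertices not yet placed. Let $H$ be an arbitrary $3$-partite triangulation on $s$ vertices, and let $X_1,X_2,X_3$ be its three colour classes. Since every edge of $H$ joins two distinct classes, it suffices to embed $X_1,X_2,X_3$ into $U,V,W$ respectively: each edge of $H$ is then asked to land in one of the three $(\eps,d)$-regular pairs $(U,V)$, $(U,W)$, $(V,W)$. Note that $\Delta:=\Delta(H)\le s-1$, so all the quantities below depend only on $d$ and $s$.

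First I would record the standard consequence of regularity that drives the argument: if $(Y,Z)$ is an $(\eps,d)$-regular pair and $Z'\subseteq Z$ satisfies $|Z'|\ge\eps|Z|$, then all but at most $\eps|Y|$ vertices $y\in Y$ satisfy $|N(y)\cap Z'|\ge(d-\eps)|Z'|$. I would then fix $\eps>0$ small enough that $(d-\eps)^{\Delta}>\Delta\eps$ (which forces in particular $(d-\eps)^{\Delta}\ge\eps$), and $m_0$ large enough that $(d-\eps)^{\Delta}m>\Delta\eps m+s$ for all $m\ge m_0$, so that the additive error coming from previously used vertices is negligible.

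Next comes the embedding itself. Fix any ordering $v_1,\dots,v_s$ of $V(H)$. For each still-unembedded vertex $x$, maintain a candidate set $C(x)$ inside its part of $\{U,V,W\}$, consisting of those vertices adjacent to the images of all already-embedded neighbours of $x$; initially $C(x)$ is the whole part, of size $m$. When it is $v_i$'s turn, the inductive hypothesis (each $C(x)$ has been shrunk at most $\Delta$ times, each time by a factor at least $d-\eps$) guarantees $|C(v_i)|\ge(d-\eps)^{\Delta}m\ge\eps m$. Applying the regularity fact with $Z'=C(x)$ for each of the at most $\Delta$ unembedded neighbours $x$ of $v_i$, at most $\Delta\eps m$ choices of image are \emph{bad}, in the sense of shrinking some $C(x)$ below a $(d-\eps)$-fraction; discarding these together with the at most $s$ already-used vertices still leaves a valid vertex $u_i\in C(v_i)$ by the choice of $\eps$ and $m_0$. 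I place $v_i$ at $u_i$ and update $C(x)\leftarrow C(x)\cap N(u_i)$ for each unembedded neighbour $x$ of $v_i$, which by the choice of $u_i$ loses at most a $(d-\eps)$-factor.

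Finally I would check that the resulting map is a genuine embedding: it is injective because used vertices are always excluded, and every edge of $H$ is realised because, at the moment its second endpoint $v_i$ is placed, $u_i$ was chosen inside the candidate set and hence adjacent to the image of the first endpoint. Since $H$ was an arbitrary $3$-partite triangulation on $s$ vertices and $\eps,m_0$ depend only on $d$ and $s$ (through $\Delta\le s-1$), this proves the lemma. The only genuinely delicate point is the bookkeeping in the choice of $\eps$: one must ensure that the per-step loss of an $\eps$-fraction of vertices to bad images, accumulated over the bounded number of steps, never drives a candidate set below the threshold $\eps m$ at which the regularity estimate ceases to apply. This is exactly what the inequality $(d-\eps)^{\Delta}>\Delta\eps$ secures.
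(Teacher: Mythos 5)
Your proof is correct: the candidate-set greedy embedding, with the choice $(d-\eps)^{\Delta}>\Delta\eps$ and $m_0$ absorbing the $+s$ term, is exactly the standard argument for the Key/Counting Lemma of Koml\'os and Simonovits, which is all the paper itself invokes (it gives no proof of Lemma~\ref{lem:count}, only a citation to Theorem~2.1 of~\cite{KS96}). The $3$-partiteness is used correctly to ensure every edge of $H$ lands in one of the three regular pairs, so nothing is missing.
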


In the case that $R$ has no small components, we will construct
quadrangulations. For this we shall use a version of the Blow-up Lemma. 
In order to state this result, we need a further definition. A pair of disjoint sets of 
vertices $U$ and $W$ in a graph $G$ is called \emph{$(\eps,\delta)$-super-regular} 
if it is $\eps$-regular, each vertex $u\in U$ has at least $\delta |W|$ neighbours in 
$W$, and each $w\in W$ has at least $\delta |U|$ neighbours in~$U$. 

The original version of the Blow-up Lemma, due to Koml\'os, S\'ark\"ozy and 
Szemer\'edi~\cite{KSS97}, showed that, for the purposes of embedding graphs 
of bounded degree, super-regular pairs behave like complete bipartite graphs. In our 
proof, we will need to embed (planar) graphs with growing degrees, which is 
generally a very difficult problem. Fortunately for us, planar graphs are examples of 
arrangeable graphs, for which a suitable extension of the Blow-up Lemma~\cite{KSS97} 
has recently been proven by B\"ottcher, Kohayakawa, Taraz and W\"urfl. 

\begin{definition}[$a$-arrangeable]  \label{def:plex:arrangeable}
Let $a$ be an integer. An $n$-vertex graph is called $a$-arrangeable if its vertices
can be ordered as $(x_1,\dots ,x_n)$ in such a way that
\[
  \left|N\Big(N(x_i)\cap\{x_{i+1}, x_{i+2},\dots, x_n\}\Big) \cap  \{x_1, x_2,\dots, x_i \}\right| \le a
\]
for each $1 \le i \le n$. 

Here, for a set of vertices $S$, we denote by $N(S)$ the set of 
those vertices not in $S$ that are adjacent to some vertex in $S$.
\end{definition}

Chen and Schelp showed that planar graphs are $761$-arrangeable \cite{CheShe93}; 
Kierstead and Trotter \cite{KieTro93} improved this to $10$-arrangeable. Thus, 
the following theorem of B\"ottcher, Kohayakawa, Taraz and W\"urfl~\cite{BKTW_blowup} 
can be used to embed planar graphs whose maximum degree is not too large.

\begin{theorem}[Arrangeable Blow-up Lemma] \label{thm:Blow-up:arr:plex}
For all $a,\Delta_R,\kappa \in \NATS$ and for all $\delta>0$ there
exists $\eps>0$ such that for every integer $r$ there is $n_0$
such that the following is true for every $n_1,\dots,n_r$ with $n_0\le n
= \sum n_i$ and $n_i \le \kappa\cdot n_j$ for all $i,j\in[r]$. 

Let $R$ be a graph of order $r$ with $\Delta(R)<\Delta_R$.
Assume that we are given a graph $G$ with a partition 
$V(G)=V_1\dcup\dots\dcup V_r$ and a graph $H$ with a partition 
$V(H)=X_1\dcup\dots\dcup X_r$ with $|V_i|=|X_i|=n_i$ such that 
$(V_i,V_j)$ is an $(\eps,\delta)$-super-regular pair for every 
$ij\in E(R)$ and such that all edges of $H$ run between sets $X_i,X_j$ 
for which $ij\in E(R)$. Further assume that $H$ is $a$-arrangeable and 
has $\Delta(H)\le \sqrt{n}/\log n$. Then there exists an embedding 
$\phi: V(H) \to V(G)$ of $H$ to $G$ such that $\phi(X_i)=V_i$.
\end{theorem}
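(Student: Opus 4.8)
The plan is to follow the randomised greedy strategy behind the original Blow-up Lemma of Koml\'os, S\'ark\"ozy and Szemer\'edi, replacing the bound on $\Delta(H)$ by the arrangeability of $H$. Write $\sigma(x)=i$ for the index with $x\in X_i$, and fix an arrangeability order $(x_1,\dots,x_n)$ of $V(H)$ witnessing $a$-arrangeability. The embedding $\phi$ is built one vertex at a time, processing the $x_j$ in this order; the balance condition $n_i\le\kappa n_j$ guarantees that all clusters stay comparable in size throughout. For each not-yet-embedded vertex $x$ I maintain a \emph{candidate set} $C(x)\subset V_{\sigma(x)}$ consisting of the positions that are still free and that lie in $N_G(\phi(x'))$ for every already-embedded neighbour $x'$ of $x$; when $x$'s turn comes it is placed at a position drawn uniformly at random from $C(x)$. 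The aim is to show that, with positive probability, every $C(x)$ stays large enough for this to succeed, and that a short final stage can complete the embedding exactly onto the prescribed clusters.

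I split the process into a main phase and a final phase. In the main phase I embed all but the last $\eps' n_i$ vertices destined for each cluster $V_i$, for a small $\eps'=\eps'(\delta)$, maintaining two invariants: (i) every unembedded $x$ has $|C(x)|\ge c\,|V_{\sigma(x)}|$ for some $c=c(\delta)>0$; and (ii) the free positions of each cluster stay ``spread out'', so that they continue to form super-regular pairs with the surviving candidate sets. Invariant (i) is where $\eps$-regularity enters: embedding a vertex intersects the candidate sets of its still-unembedded neighbours with a $G$-neighbourhood, and regularity guarantees that, as long as a candidate set was not already tiny, each such intersection keeps at least a $(\delta-\eps)$-fraction of its size. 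The difficulty absent from the bounded-degree setting is that a vertex may have up to $\Delta(H)$ neighbours, so one cannot afford one shrinkage per neighbour. This is exactly where arrangeability enters: with the vertices processed in the arrangeability order, at the step where $x_i$ is embedded the previously embedded vertices that constrain the candidate sets of the \emph{forward} neighbours of $x_i$ --- namely $N\big(N(x_i)\cap\{x_{i+1},\dots,x_n\}\big)\cap\{x_1,\dots,x_i\}$ --- number at most $a$, so these candidate sets are governed by boundedly many images rather than by the full, growing degree of $H$.

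To upgrade these heuristics to high-probability statements I would use martingale concentration of Azuma--Hoeffding type. Exposing the random placements one at a time, the size of any fixed candidate set changes by at most one per placement, and embedding a single vertex touches at most $\Delta(H)$ candidate sets; the hypothesis $\Delta(H)\le\sqrt n/\log n$ keeps both the number of relevant exposures and the martingale increments small enough that every candidate set deviates from its expectation by $o(|V_i|)$ with probability $1-o(n^{-1})$. A union bound over all $n$ vertices and all steps then shows that invariants (i) and (ii) survive the entire main phase with positive probability.

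In the final phase only an $\eps'$-fraction of the vertices of each $X_i$ and of the positions of each $V_i$ remain, and by (i)--(ii) the bipartite ``candidate graph'' between the leftover vertices of $X_i$ and the free positions of $V_i$ satisfies a robust (defect) Hall condition. A perfect matching in each such auxiliary graph completes $\phi$ consistently with all adjacency constraints and yields $\phi(X_i)=V_i$, as required. The main obstacle is the mechanism highlighted in the second paragraph: one must prove that arrangeability genuinely caps the interference, so that growing-degree candidate sets do not collapse, while simultaneously ordering the exposure of random choices so that the martingale increments stay bounded and the concentration estimates survive a union bound over all $n$ candidate sets.
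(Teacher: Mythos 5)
The paper does not prove this statement at all: Theorem~\ref{thm:Blow-up:arr:plex} is imported as a black box from B\"ottcher, Kohayakawa, Taraz and W\"urfl~\cite{BKTW_blowup}, so there is no internal proof to compare against. Judged on its own terms, your proposal correctly identifies the broad strategy of that paper (a randomised greedy embedding with candidate sets, arrangeability replacing bounded degree, and a matching-based endgame), but it is a road map with at least two genuine gaps. The first is the endgame. A perfect matching between the leftover vertices of $X_i$ and the free positions of $V_i$ respects only the constraints imposed by \emph{already embedded} neighbours; if two leftover vertices are adjacent in $H$, nothing forces their matched images to be adjacent in $G$. In the Koml\'os--S\'ark\"ozy--Szemer\'edi scheme this is resolved by pre-selecting the ``buffer'' vertices to be embedded last so that they form an independent set with well-separated neighbourhoods, and that selection is a nontrivial step which becomes harder, not easier, when $\Delta(H)$ may be as large as $\sqrt{n}/\log n$. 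Your leftover set --- the tail of the arrangeability order within each cluster --- has no such independence property, and you would additionally need to reconcile the arrangeability order with the requirement that each cluster fills at a controlled rate; these two orders are in general incompatible, so one cannot simply process $x_1,\dots,x_n$ in sequence.

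The second gap is that the role you assign to arrangeability is the easy half of the story. The fact that each candidate set is cut down by at most $a$ already-embedded images follows already from the $a$-degeneracy implied by the arrangeability order, and gives the deterministic lower bound of roughly $(\delta-\eps)^a|V_{\sigma(x)}|$ before occupied positions are subtracted. The genuinely hard point when $\Delta(H)$ grows is the control of ``endangered'' vertices: those whose candidate sets have been disproportionately consumed by occupied positions and which must therefore be embedded out of turn. With bounded degree one bounds the number of such vertices by a counting argument that uses $\Delta(H)=O(1)$ in an essential way; with $\Delta(H)\le\sqrt n/\log n$ a single placement touches up to $\sqrt n/\log n$ candidate sets, the queue of endangered vertices can in principle be flooded, and the martingale bookkeeping must track multiplicatively shrinking sets across a union bound over all vertices and all times. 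Your concentration paragraph asserts that Azuma--Hoeffding closes this but does not engage with the queue mechanism at all, and that is precisely where the new content of the arrangeable Blow-up Lemma lies. In short: right general approach, but the two places where the proof is actually hard are left open.
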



We will embed planar graphs into large well connected subgraphs of $G$. 
These subgraphs will correspond to spanning trees of components of $R$. 
However, to be able to use Theorem \ref{thm:Blow-up:arr:plex}, we shall need 
spanning trees whose maximum degree is bounded. This is the purpose of the 
following lemma.

\begin{lemma}\label{lem:spanbound}
 Given $k\in \NATS$, let $R$ be a connected graph with minimum degree at
 least $v(R)/(2k)$. Then $R$ has a spanning tree with maximum degree $8k$.
\end{lemma}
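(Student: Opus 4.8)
The plan is to take an extremal spanning tree and repeatedly reduce its maximum degree by local edge-exchanges. Concretely, among all spanning trees of $R$ (which exist since $R$ is connected) I would fix one, $T$, with the smallest possible maximum degree $\Delta$ and, subject to that, with the fewest vertices attaining degree $\Delta$; then I would suppose for contradiction that $\Delta \ge 8k+1$. Writing $r := v(R)$, I may assume $r$ is large, since if $r \le 8k+1$ then every spanning tree already has maximum degree at most $r-1 \le 8k$ and there is nothing to prove. Let $v$ be a vertex of degree $\Delta$. Deleting $v$ splits $T$ into $\Delta$ subtrees $T_1,\dots,T_\Delta$, where $T_i$ is joined to $v$ by a single edge $vu_i$; I call $u_i$ the root of $T_i$.

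The exchange I would use is the following. Suppose some root $u_i$ has an $R$-neighbour $y$ lying in a different subtree $T_j$. Adding $u_iy$ to $T$ creates a unique cycle, and since $u_i$ and $y$ lie in different components of $T-v$ this cycle runs through $v$ and contains the edge $vu_i$; hence $T' := T + u_iy - vu_i$ is again a spanning tree. In $T'$ the degree of $v$ has dropped by one, the degree of $u_i$ is unchanged, and only $\deg(y)$ has increased, by one. Therefore, if I can always find such a root $u_i$ whose neighbour $y$ satisfies $\deg_T(y) \le \Delta - 2$, then $\deg_{T'}(y) \le \Delta-1$, so $T'$ has maximum degree at most $\Delta$ but strictly fewer vertices of degree $\Delta$ (the vertex $v$ has left this set and none has entered it), contradicting the choice of $T$. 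Thus it suffices to produce one root of some subtree with an $R$-neighbour of degree at most $\Delta-2$ in another subtree.

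The main work, and the step I expect to be the crux, is the counting that produces such a root. Write $H := \{x : \deg_T(x) \ge \Delta-1\}$ for the set of high-degree vertices; since $\sum_x \deg_T(x) = 2(r-1)$ and $\Delta - 1 \ge 8k$, the handshake bound gives $|H| < r/(4k)$. On the other hand, the $\Delta \ge 8k+1$ subtrees partition the $r-1$ vertices of $T-v$, so averaging shows that some subtree $T_i$ is small, of size at most $s := \lfloor r/(4k)\rfloor$ (otherwise $r - 1 \ge \Delta(s+1) > 2r$). Its root $u_i$ then has, by the minimum-degree hypothesis, at least $r/(2k) - (|T_i|-1)$ neighbours outside $T_i$, of which at most one is $v$; hence $u_i$ has at least $r/(2k) - s \ge r/(4k)$ neighbours in the other subtrees. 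If no exchange as above were available, every one of these neighbours would lie in $H$, forcing $|H| \ge r/(4k)$ and contradicting $|H| < r/(4k)$. This contradiction shows that a valid exchange always exists, whence $\Delta \le 8k$, as required. The delicate point throughout is the bookkeeping of constants, matching the size threshold $s$ for ``small'' subtrees against the handshake bound on $|H|$, and this is exactly what pins down the factor $8k$ and must be tracked with care.
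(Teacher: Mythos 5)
Your proof is correct and follows essentially the same route as the paper: both take an extremal spanning tree, locate a small branch hanging off a vertex of excessive degree, and use the minimum-degree hypothesis together with a handshake bound on the number of high-degree vertices to find an edge exchange that improves the tree. The only difference is the potential being optimised (the paper minimises the sum of squared degrees, whereas you minimise the maximum degree and then the number of vertices attaining it), which is immaterial.
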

\begin{proof}
 We define the \emph{score} of a spanning tree $T$ of $R$ to be the sum of the
 squares of the degrees of vertices in $T$. Let $T$ be a spanning tree of $R$
 with minimum score. Observe that $T$ has less than $v(R)/(4k)$ vertices of
 degree $8k$, since the sum of the vertex degrees of $T$ is $2v(R)-2$.

 Suppose that there is a vertex $u$ of $T$ whose degree in $T$ exceeds $8k$.
 Observe that the removal of $u$ from $T$ disconnects $T$ into more than $8k$
 components, one of which, $C$, has less than $v(R)/(8k)$ vertices. Let $v$ be
 the neighbour of $u$ which is in $C$. Now $v$ has at
 least $v(R)/(2k)$ neighbours in $R$, of which less than $v(R)/(8k)$ are in $C$
 and a further less than $v(R)/(4k)$ are of degree at least $8k$. It follows
 that $v$ has a neighbour $u'$ in $R$ which is not in $C$ and whose degree is
 less than $8k$. Let $T'$ be obtained from $T$ by deleting $uv$ and inserting
 $u'v$. Then $T'$ is still a spanning tree of $v$. Each vertex of $T'$ has the
 same degree as in $T$ except for $u$ and $u'$, which have respectively lost and
 gained one neighbour. It follows that the score of $T'$ is smaller than
 that of $T$, which by contradiction completes the proof.
\end{proof}

Finally, we need to specify which planar graphs we will embed into our spanning
trees.

Let $H=(V,E)$ be a plane graph. We say that a $k$ element subset 
$V'\subseteq V$ forms a \emph{bag of order $k$} in $H$ if there is
$\{x_1,x_2\}\subseteq V\setminus V'$ such that $V'\cup\{x_1,x_2\}$
induces a copy of $K_{2,k}$ in $H$ and all inner faces of
$H[V'\cup\{x_1,x_2\}]$ are also faces of $H$. We call the vertices of $V'$
which are not in the outer face the \emph{interior vertices} of the bag.

\begin{figure}[ht]
\begin{center}
\begin{tikzpicture}[inner sep=1.75pt]


  \node[label=270:$x_1$] (0) at (0,0) {};
  \node (1) at (-2.2,1) {};
  \node[label=180:$u$] (2) at (-1.2,1) {};
  \node (3) at (-.4,1) {};
  \node (4) at (.4,1) {};
  \node[label=0:$u'$] (5) at (1.2,1) {};
  \node (6) at (2.2,1) {};
  \node[label=90:$x_2$] (7) at (0,2) {};

  \node[label=270:$x_1$] (8) at (6,0) {};
  \node (9) at (3.8,1) {};
  \node (10) at (4.5,1) {};
  \node (11) at (5.1,1) {};
  \node[label=180:$u$] (12) at (5.8,1) {};
  \node[label=0:$u'$] (13) at (7.2,1) {};
  \node (14) at (8.2,1) {};
  \node[label=90:$x_2$] (15) at (6,2) {};

  \node (16) at (6.35,.5) {};
  \node (17) at (6.35,.7) {};
  \node (18) at (6.35,.9) {};
  \node (19) at (6.35,1.1) {};
  \node (20) at (6.35,1.3) {};
  \node (21) at (6.35,1.5) {};

  \draw[black,thick] 
             (1) -- (0) -- (2) -- (7) -- (3) -- (0) -- (4) -- (7) -- (5) -- (0) -- (6) -- (7) -- (1)
             (9) -- (8) -- (10) -- (15) -- (11) -- (8) -- (12) -- (15) -- (13) -- (8) -- (14) -- (15) -- (9);

  \draw[black]
            (12) -- (16) -- (13) -- (17) -- (12) -- (18) -- (13) -- (19) -- (12) -- (20) -- (13) -- (21) -- (12);


  \foreach \i in {0,...,15}
  {
    \fill[black!80] (\i) circle (3pt);
  }

  \foreach \i in {16,...,21}
  {
    \fill[black!80] (\i) circle (2pt);
  }

\end{tikzpicture}
\caption{A bag of order $k=6$; the same bag reordered and with an insertion of $\ell=6$ vertices.}
\label{fig:sub:bags}
\end{center}
\end{figure}
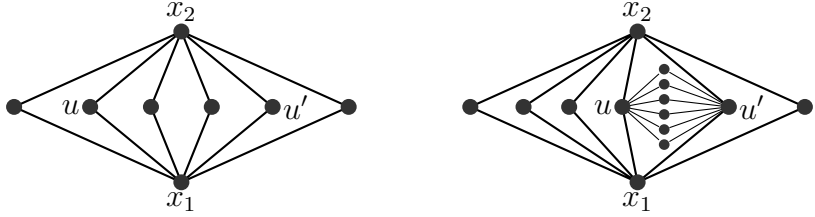

Observe that we can reorder the interior vertices of a bag without affecting
the planarity of $H$. A bag is thus a very convenient structure into which one
can put further vertices: if some vertex $v$ (not in $H$) is adjacent in a
supergraph $G$ of $H$ to any two interior vertices $u,u'$ of a~bag, then we can
redraw $H$ such that $u$ and $u'$ are consecutive in the bag, and insert $v$ and
$uv,u'v$ to obtain $H'$. If $H$ is a quadrangulation, then $H'$ is still a
quadrangulation contained in $G$. Furthermore, if $v_1,\ldots,v_\ell\notin V(H)$
are all adjacent to $u$ and $u'$ in $G$, then we can insert all these vertices
and edges to $u$ and $u'$, and still obtain a subgraph $H''$ of $G$ which is a
quadrangulation. Furthermore, $v_1,\ldots,v_\ell$ then form a~bag of order $\ell$ in
$H''$. This will be particularly useful in the proof of the following lemma.

\begin{lemma} \label{lem:blow-up-of-tree}
 Let $T$ be a tree of order $r\ge 2$, $n\ge (16r)^3$, and let $G$ be an
 $n$-vertex graph with the partition $V_1\dcup\dots\dcup V_r$ of its
 vertex set such that $|V_i|\le 2|V_j|$ for all $i\neq j$ and $G[V_i,V_j]$
 is a complete bipartite graph whenever $ij\in E(T)$. 

 Then $G$ contains a plane quadrangulation $H$ with maximum degree
 $\Delta(H)\le n^{1/3}+2$ as a spanning subgraph. Furthermore, all but
 at most $9 n^{2/3}$ vertices of $H$ are contained in a collection of 
 pairwise disjoint bags each of order in the interval 
 $[n^{1/3}/2, n^{1/3}]$.
\end{lemma}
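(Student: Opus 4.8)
The plan is to build $H$ as a \emph{tree of bags}, starting from a single bag and repeatedly applying the insertion operation described just before the statement. First I would root $T$ at an arbitrary leaf $1$ with neighbour $2$ and fix the bipartition of $H$ coming from the $2$-colouring of $T$: every bag will have its two spine vertices in one colour class and its interior in the other, and all bag edges will run along edges of $T$, so that the complete bipartite pairs $G[V_i,V_j]$ with $ij\in E(T)$ supply every edge I need. The initial bag $B_0$ is a copy of $K_{2,k}$ with its two spine vertices in $V_1$ and its $k\in[n^{1/3}/2,n^{1/3}]$ interior vertices in $V_2$, which is already a quadrangulation. Every further bag is inserted into a face of an already-built bag, its spine being two consecutive interior vertices $u,u'$ of the parent (lying in some cluster $V_i$) and its interior a group of fresh vertices of an adjacent cluster $V_j$ with $ij\in E(T)$; complete bipartiteness guarantees the edges $uv,u'v$, and by the insertion operation the result stays a quadrangulation in which the inserted vertices again form a bag.

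Next I would set up the global combinatorial assignment before drawing anything. Partition each cluster $V_i$ into groups of size in $[n^{1/3}/2,n^{1/3}]$ (possible since $n_i\ge 8n^{2/3}$, with at most one undersized leftover group per cluster); these groups are the bag interiors, so every vertex is an interior vertex of exactly one bag and the bags are automatically disjoint on their interiors. Each group in a non-root cluster $V_j$ is declared a bag spined by a pair in the parent cluster $V_{p(j)}$, while the groups of the root cluster $V_1$ are spined from a pair in the child $V_2$; this turns the bags into a single rooted insertion tree that projects onto a walk in $T$ and reaches every cluster. I then process the clusters in breadth-first order from the root, inserting each bag of $V_j$ into an available face of an already-constructed $V_{p(j)}$-bag.

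The degree bound is enforced by filling only \emph{alternate} faces of each bag: then each interior vertex borders at most one occupied face, hence is the spine of at most one child, so every vertex has degree at most $2+(\text{one bag order})\le n^{1/3}+2$, while the spine vertices of $B_0$ have degree at most $n^{1/3}$. A short count shows the supply of faces never runs out: a bag of order $\ge n^{1/3}/2$ offers $\ge n^{1/3}/4$ alternate spine-pairs, covering a cluster $V_j$ needs at most $2n_j/n^{1/3}$ bags, and since $n\ge(16r)^3$ gives $r\le n^{1/3}/16$ each cluster has fewer than $n^{1/3}/16$ neighbours, so the number of child bags any cluster must host is far smaller than the number of spine-pairs its own bags provide. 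Consequently every vertex becomes the interior of a medium bag except the two spine vertices of $B_0$, at most one undersized group per cluster, and a bounded amount of boundary waste per bag; as there are $O(n^{2/3})$ bags and $r\le n^{1/3}/16$ clusters, these sum to at most $9n^{2/3}$, and the remaining vertices lie in the pairwise-disjoint medium bags, as required.

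I expect the main obstacle to be the \textbf{global consistency of the insertion tree}: one must check that the per-cluster bag counts and spine-pair supplies match all along $T$, so that the many simultaneous insertions realise a single plane graph without ever running out of faces of the correct cluster-type, and that the alternate-face rule controlling degrees is compatible with covering every vertex. The two genuinely delicate points are the treatment of the root cluster — which has no parent and must borrow spines from a child without closing a cycle in the insertion tree — and the bookkeeping that funnels all leftover and boundary vertices into the generous $9n^{2/3}$ slack; the inequality $r\le n^{1/3}/16$ arising from $n\ge(16r)^3$ is exactly what makes every one of these counts go through.
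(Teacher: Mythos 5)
Your construction of $H$ --- greedily inserting groups of roughly $n^{1/3}$ vertices as copies of $K_{2,k}$ into faces spanned by two low\nobreakdash-degree vertices of an adjacent cluster, following the tree $T$ --- is essentially the paper's construction, and your degree bound and face-supply count (using $n\ge(16r)^3$ and $|V_i|\ge n/(2r)$) are sound; the ``global consistency'' issue you flag as the main obstacle is in fact the part that goes through comfortably. The genuine gap is in the ``furthermore'' clause. By the definition of a bag, every inner face of $H[V'\cup\{x_1,x_2\}]$ must be a face of $H$; hence an interior vertex of a group that is later used as a spine for a child bag cannot belong to the bag formed by that group, and the surviving interior vertices must moreover be reordered so as to be consecutive before they form a bag at all (under your alternate-face rule a heavily used parent group has no two consecutive untouched interior vertices). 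So each group yields a bag of order equal to its size minus twice the number of children it hosts, and any group for which this drops below $n^{1/3}/2$ must be discarded.

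Two things go wrong in your accounting of this. First, you take groups of order as small as $n^{1/3}/2$, so a group that hosts even a single child already fails the order requirement; the number of groups hosting at least one child can be $\Theta(n^{2/3})$, and discarding all of them could cost $\Theta(n)$ vertices, not $O(n^{2/3})$. The paper avoids this by partitioning each cluster into a \emph{minimum} number of parts of size at most $n^{1/3}$, as equal as possible, so every group has size at least $\tfrac{9}{10}n^{1/3}$ and becomes too small only after donating at least $\tfrac15 n^{1/3}$ spine pairs. Second, your phrase ``a bounded amount of boundary waste per bag'' hides the key count: each inserted group consumes exactly one spine pair, so the total number of pairs consumed is at most the total number of groups, at most $2n^{2/3}$; consequently at most $10n^{1/3}$ groups lose $\tfrac15 n^{1/3}$ or more pairs, and discarding those (at most $\tfrac12 n^{1/3}$ surviving vertices each) together with the at most $4n^{2/3}$ vertices actually used as spines gives the $9n^{2/3}$ bound. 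Without enlarging the groups and making this count explicit, the claim that all but $9n^{2/3}$ vertices lie in disjoint bags of order at least $n^{1/3}/2$ does not follow from what you wrote.
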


\begin{proof}
We first prove that $G$ has a quadrangulation $H$ with 
$\Delta(H)\le n^{1/3}+2$ by induction on $r$. So assume that $r=2$ and 
$G$ is a bipartite graph with partite sets $V_1,V_2$. We partition $V_i$
into a minimum number of sets $W_{i,j}$ with sizes $|W_{i,j}|\le n^{1/3}$ as equal as 
possible. The plane graph $H$ is constructed as follows. Take 
$x_1,x_2\in W_{1,1}$ and all of $W_{2,1}$ and embed the graph induced by
these vertices into the plane. Let $y_1,y_2\in W_{2,1}$ lie in the same 
face and embed $W_{1,1}\setminus \{x_1,x_2\}$ into this face connecting 
each vertex to $y_1$ and $y_2$. We continue greedily embedding sets 
$W_{i,j}$ into faces with two vertices of degree 2 from $V_{3-i}$ and 
adding all edges in between. This process does not stop before all 
vertices of $G$ have been embedded into the plane. The resulting graph 
$H$ is a quadrangulation with $\Delta(H)\le n^{1/3}+2$.

Now assume that $r>2$ and $1r\in E(T)$. Further assume that we have 
embedded $V_1\dcup \dots\dcup V_{r-1}$ this way and obtained a 
quadrangulation $H'$ on $V\setminus V_r$. We extend $H'$ to a 
quadrangulation $H$ on $V$ as follows. Again partition $V_r$ into a minimum number of sets $W_{r,i}$ with sizes $|W_{r,i}|\le n^{1/3}$ as equal as possible. For each $i$ sequentially, pick a pair of 
vertices $u, u'$ in $V_1$ that have degree 2 and lie in the same face in $H'$, embed all vertices from $W_{r,i}$ into this face, and connect these to $u, u'$.
Since $|V_1|\ge n/(2r-1)> 8n^{2/3}$, we do not run out of pairs in~$V_1$.

It remains to show that most vertices lie in a collection of large disjoint bags. 
Recall that the planarity of $H$ is preserved if we reorder the embedding in 
such a way that all vertices in $W_{i,j}$ of degree~2 in $H$ form a bag. 
Since there are at most $2n^{2/3}$ many sets $W_{i,j}$, all but at most 
$4n^{2/3}$ many vertices lie in pairwise disjoint bags. Some of these bags 
might be small, i.e., they might have order less than $\tfrac{1}{2}n^{1/3}$. 
Assume that the bag in $W_{i,j}$ is small. Note that 
$|W_{i,j}|\ge \tfrac{9}{10}n^{1/3}$ by construction. Thus at least 
$\tfrac15 n^{1/3}$ pairs from $W_{i,j}$ have been used to embed other 
sets $W_{i',j'}$. But there are at most $2n^{2/3}$ many sets $W_{i,j}$. 
Hence, at most $10n^{1/3}$ bags are small. Consequently, all but at most 
$4n^{2/3}+10n^{1/3}\cdot \tfrac{1}{2}n^{1/3}\le 9 n^{2/3}$ vertices lie 
in disjoint bags of size at least~$\tfrac12 n^{1/3}$.
\end{proof}

\section{Proof of Theorem~\ref{thm:main}}

Given $\gamma>0$, let $k\in \NATS$ be such that $k\le 1/(2\gamma)<k+1$. 
We set $\beta=\gamma-1/(2(k+1))$, $\delta=\beta/8$, $d=\beta/4$ and 
$s=C(\beta)+6$, where $C(\beta)$ is the constant returned by Theorem~\ref{thm:KOTquad}. 
Next we choose $\eps$ such that $2\eps$ is sufficiently small to apply Theorem~\ref{thm:Blow-up:arr:plex} 
with $a=10$, $\Delta_R=8(k+1)+1$, $\kappa=2$ and $\delta$ as given.
We further insist that
\[
 \eps\le \frac{\beta}{10^5k^4(8(k+1)+2)}\,.
\]

Let $r_1$ be the parameter returned by Lemma~\ref{lem:plex:reg} for $d$ and $\eps$
as chosen, and $m_0$ that returned by Lemma~\ref{lem:count}. Let $n_0\ge \max\big\{(16r_1)^3,6(k+1)s,m_0r_1\big\}$ be sufficiently large so that
Theorem~\ref{thm:Blow-up:arr:plex} applies with any $r\le r_1$.

Suppose that $n\ge (4k)\cdot n_0$ and let $G$ be an $n$-vertex graph with minimum degree 
$\delta(G)\ge\big(\tfrac{1}{2(k+1)}+\beta\big)n$.
By Lemma~\ref{lem:plex:reg} there is an $(\eps, d)$-regular partition 
$V(G)=V_0\dcup\ldots\dcup V_r$ with $r\le r_1$ such that the corresponding
reduced graph $R$ satisfies $\delta(R)\ge \big(\tfrac{1}{2(k+1)}+\beta/2\big)r$.
We distinguish two cases.

\underline{Case 1:} $R$ has a component with less than $2\delta(R)$ vertices.
In this case $R$ contains a triangle. It follows by Lemma~\ref{lem:count} that
$G$ contains a triangulation $T$ on $s$ vertices, which has $3s-6$ edges. The
graph $G-V(T)$ has minimum degree at least
$\big(\tfrac{1}{2(k+1)}+\beta\big)n-s\ge\beta n$, where the last inequality is by
our choice of $n_0$. Therefore, by Theorem~\ref{thm:KOTquad}, $G-V(T)$ 
contains a planar subgraph $S$ with at least $2(n-s)-C(\beta)$ edges. Then 
$G$ contains the disjoint union of $S$ and $T$, which is planar and has 
at least $2n-2s-C(\beta)+3s-6=2n$ edges (by choice of $s$) as required.

\underline{Case 2:} Every component of $R$ has at least $2\delta(R)>r/(k+1)$
vertices. It follows that $R$ has $c\le k$ components. We will show that we
can cover $G$ with $c$ vertex-disjoint quadrangulations, which implies that $G$
contains a planar subgraph with at least $2n-4c\ge 2n-4k$ edges as required.

Let $C$ be a component of $R$, and $T$ be its spanning tree with maximum 
degree $8(k+1)$ guaranteed by Lemma~\ref{lem:spanbound}. Let $V_i$ be any
cluster of $C$ and $ij\in T$. Observe that, by the $(\eps, d)$-regularity of
$(V_i, V_j)$, at most $\eps |V_i|$ vertices do \emph{not} have at least 
$(d-\eps)|V_j|$ neighbours in $V_j$.  It follows that we can remove from each
cluster $V_i$ at most $8(k+1)\eps|V_i|$ vertices and obtain a set $V'_i$ whose 
every vertex has at least $(d-\eps)|V_j|$ neighbours in each $V_j$ such that 
$ij\in T$. Since $(8(k+1)+1)\eps\le\beta/8$ and $d-\delta=\beta/8$, if $ij\in T$, 
then each vertex in $V_i'$ has at least $\delta |V'_j|$ neighbours in $V'_j$. 
Moreover, since  $8(k+1)\eps\le 1/2$, for each $i$ we have $|V'_i|\ge |V_i|/2$.
Consequently,  the pair $(V'_i,V'_j)$ is $2\eps$-regular (see~\cite[Fact 1.5]{KS96}) 
and, since $|V_i|=|V_j|$, we also have $|V'_i|\le 2|V'_j|$ for each $i,j$. It follows 
that each edge $ij$ of $T$ corresponds to a $(2\eps,\delta)$-super-regular pair 
$(V'_i,V'_j)$, and the cluster sizes are not too unbalanced, as required for 
Theorem~\ref{thm:Blow-up:arr:plex}.

Let $G'$ be a graph whose vertex set is the union of the sets $V'_i$, $i\in C$ and 
whose edges are all edges between $V'_i$ and $V'_j$ whenever $ij\in T$. Note that
$G'$ has $n_C$ vertices, where $n_C$ satisfies
$$
 n\ge n_C\geq 2\delta(R)\big(1-8(k+1)\eps\big)\frac{(1-\eps)n}{r}\ge \frac{n}{2k}\ge n_0.
$$
By Lemma~\ref{lem:blow-up-of-tree}, $G'$ contains a plane quadrangulation $H$ in
which the maximum degree is at most ${n_C}^{1/3}+2\le n^{1/3}+2$, and in which 
at most $9n_C^{2/3}$ vertices are not contained in bags of order between 
$\tfrac12 {n_C}^{1/3}\geq (n/16k)^{1/3}$ and $n_C^{1/3}\le n^{1/3}$. By 
Theorem~\ref{thm:Blow-up:arr:plex}, $H$ can be embedded into the subgraph 
of $G$ induced on $\bigcup C$.

Repeating this for each component we obtain $c$ vertex disjoint
quadrangulations $H_1,\ldots,H_c$ in $G$, together with a collection
$B_1,\ldots,B_\ell$ of pairwise disjoint bags of order at least $(n/16k)^{1/3}$
covering all but at most $9\sum_{C}n_C^{2/3}\le 9n^{2/3}k$ vertices of 
$\bigcup_{i\in[r]}V'_i$. In particular, we have that
$\tfrac12n^{2/3}<\ell\le (16k)^{1/3}n^{2/3}$.

Let $L$ be the set of vertices in none of the quadrangulations. Observe 
that every vertex in $L$ is either in $V_0$ or in $V_i\setminus V'_i$ for 
some $i$; therefore, it follows that $|L|\le(8(k+1)+1)\eps n$. We say that
a bag $B_i$ is \emph{good} for $u\in L$ if $u$ has at least $n^{1/3}/(32k^2)$ 
neighbours in $B_i$. Since $(8(k+1)+1)\eps n+9n^{2/3}k\le\beta n$, each 
vertex $u\in L$ has at least $n/(2(k+1))$ neighbours contained in 
$B_1\cup\cdots\cup B_\ell$. Of these at least 
$n/(2(k+1))-\ell\cdot n^{1/3}/(32k^2)\ge n/(6k)$ lie in bags that are 
good for $u$. Hence, at least $n^{2/3}/(6k)\ge\ell/(24k^2)$ of the bags 
$B_i$ are good for $u$. We now assign vertices $L_i$ of $L$ to each bag 
$B_i$ sequentially as follows. From the collection of unassigned vertices 
of $L$ for which $B_i$ is good, we assign $L_i$ to be any $n^{1/3}/(128k^2)$ 
of them if this is possible, and all of them if not. Suppose that after 
carrying out this procedure there is a vertex $u$ of $L$ which is not in 
any $L_i$. Then it must be the case that for each $B_i$ good for $u$, we have
$|L_i|=n^{1/3}/(128k^2)$. But there are at least $\ell/(24k^2)$ such $B_i$, and
$\ell/(24k^2)\cdot n^{1/3}/(128k^2)>(8(k+1)+1)\eps n\ge|L|$, (where the first
inequality is by choice of $\eps$) which is a contradiction.

We then work as follows. For each bag $B_i$, we reorder the interior vertices
of $B_i$ such that the first vertex of $L_i$ is adjacent to the first and second
interior vertices of $B_i$, the second vertex of $L_i$ to the third and fourth,
and so on. Because each vertex of $L_i$ has at least $n^{1/3}/(32k^2)$
neighbours in $B_i$, and $|L_i|\le n^{1/3}/(128k^2)$, this is possible. We 
now insert, for each $j$, the $j$th vertex of $L_i$ into the interior face of $B_i$
containing the $(2j-1)$st and $2j$th interior vertices, and add the edges to
those two vertices. Let the plane graphs so constructed be $H'_1,\ldots,H'_c$.
By construction, these graphs are vertex disjoint and cover $G$, and since
$H_i$ was a quadrangulation, so $H'_i$ is also a quadrangulation for each $i$.
The disjoint union of $H'_1,\ldots,H'_c$ is then a planar subgraph of $G$ with
$2n-4c\ge 2n-4k$ edges, as required.

\section{Concluding Remarks}

There remain several open questions on planar graphs. In particular, it is
possible that in Theorem~\ref{thm:main} the constant $n_\gamma$ can be taken to
be an absolute constant provided $\gamma\gg n^{-1/2}$. Note that this is a
natural lower bound since there are bipartite graphs without $4$-cycles of
minimum degree $\Theta(n^{1/2})$. Another possibility would be to investigate
the behaviour of the planarity function $\pl(n,\gamma n)$ for
$\gamma\in(1/2,2/3]$ in more detail. Finally, one could ask these questions if
the constraint imposed is that of edge density rather than minimum degree.

More generally, one could replace `planar graphs' by some other
property --- topologically defined, or by forbidden minors, for example.

\section*{Acknowledgement}

The third author would like to thank the London School of Economics for their hospitality while this work was being completed.

 
\bibliographystyle{amsplain} 
\bibliography{PlanarSub2n4}

\end{document}